
\documentclass{amsart}
\usepackage{graphicx}
\usepackage{amsmath}
\usepackage{amsfonts}
\usepackage{amssymb}
\usepackage{ifpdf}
\newtheorem{theorem}{Theorem}

\newtheorem{corollary}[theorem]{Corollary}

\newtheorem{lemma}[theorem]{Lemma}

\newtheorem{remark}[theorem]{Remark}

\begin{document}

\title[]{Sobolev embeddings for Fractional Haj{\l}asz-Sobolev spaces in the
setting of rearrangement invariant spaces}
\author{Joaquim Mart\'{i}n$^{\ast}$}
\address{Department of Mathematics\\
Universitat Aut\`{o}noma de Barcelona} \email{jmartin@mat.uab.cat}
\author{Walter A.  Ortiz**}
\address{Department of Mathematics\\
Universitat Aut\`{o}noma de Barcelona} \email{waortiz@mat.uab.cat}
\thanks{$^{\ast}$Partially supported by Grants  MTM2016-75196-P, MTM2016-77635-P (MINECO) and 2017SGR358, 2017SGR395 (AGAUR, Generalitat de Catalunya)}
\thanks{**Partially supported by Grant 2017SGR395 (AGAUR, Generalitat de Catalunya)}
\thanks{This paper is in final form and no version of it will be submitted for
publication elsewhere.} \subjclass[2000]{Primary 46E35.}
\thanks{Conflict of Interest. The authors declare that they have no conflict
of interest.}

\keywords{Sobolev inequality,  Fractional Haj{\l}asz-Sobolev spaces,
Metric measure spaces}
\begin{abstract}
We obtain symmetrization inequalities in the context of Fractional
Haj{\l}asz-Sobolev spaces in the setting of rearrangement invariant
spaces and prove that for a large class of measures our
symmetrization inequalities are equivalent to the lower bound of the
measure.
\end{abstract}
\maketitle

\section{Introduction}

Let us consider a metric measure space $\left(\Omega,d,\mu\right)$
where $\mu$ is a Borel measure on $(\Omega,d)$ such
$0<\mu(B)<\infty$, for every ball $B$ in $\Omega.$ We will always
assume $\mu(\Omega)=\infty$ and $\mu(\left\{ x\right\}  )=0$ for all
$x\in\Omega.$ Let $X$ be a rearrangement invariant (r.i.) space on
$\Omega$ (see section \ref{rear} below). In this paper, we introduce
the fractional Haj{\l}asz-Sobolev spaces $M^{s,X}\left(
\Omega\right) $ for $s>0,$ and we will focus on understanding the
relation between Sobolev embeddings theorems for spaces
$M^{s,X}\left(  \Omega\right) $ and the growth properties of the
measure $\mu$.

Let $s>0$ and let $X$ be a r.i. space on $\Omega.$ We say that $f\in
M^{s,X}\left(  \Omega\right)  $, if $f\in X,$ and there exits a
non-negative measurable function $g\in X$ such that%

\begin{equation}
\left|  f(x)-f(y)\right|  \leq d(x,y)^{s}\left(  g(x)+g(y)\right)  \text{
\ \ }\mu-a.e.\text{ \ }x,y\in\Omega.\label{sgradi}%
\end{equation}

A function $g$ satisfying (\ref{sgradi}) will be called a
$s-$gradient of $f$. We denote by $D^{s}(f)$ the collection of all
$s-$gradients of $f$. The \textbf{homogeneous Haj{\l}asz-Sobolev
space} $\dot{M}^{s,X}(\Omega)$ consists of all
functions $f\in X$ for which%

\[
\left\|  f\right\|  _{\dot{M}^{s,X}\left(  \Omega\right)  }=\inf_{g\in
D^{s}(u)}\left\|  g\right\|  _{X}%
\]
is finite. The \textbf{Haj{\l}asz-Sobolev space} $M^{s,X}(\Omega)$ is $\dot{M}%
^{s,X}(S)\cap X$ equipped with the norm
\[
\left\|  f\right\|  _{M^{s,X}\left(  \Omega\right)  }=\left\|
f\right\|  _{X}+\left\|  f\right\|  _{\dot{M}^{s,X}\left(  \Omega\right)  }.
\]

When $X=L^{p}(\Omega),1\leq p\leq\infty,$ we shall write $M^{s,p}\left(
\Omega\right)  $ instead of $M^{s,X}\left(  \Omega\right)  .$

\begin{remark}
In the context of metric spaces, the spaces $M^{1,p}\left(
\Omega\right)  $ were first introduced by Haj{\l}asz (see \cite{Ha}
and \cite{Ha1}). They play an important role in the area of analysis
called analysis on metric spaces and a lot of papers have focused on
this subject (see for example \cite{Am}, \cite{HKT1}, \cite{Hei},
\cite{JK}, \cite{JKS},
 and the references quoted therein).\ When the measure
$\mu$ is doubling\footnote{$\mu$ is said to be doubling provided
there exists a constant $C>0$ such that
\par%
\[
\mu(2B)\leq C\mu(B)\text{ for all balls }B\subset\Omega.
\]
}, spaces $M^{1,X}(\Omega)$ have been considered in some particular
cases, for example, Haj{\l}asz-Lorentz-Sobolev spaces
$M^{1,L^{p,q}}\left(  \Omega\right)  $
(see \cite{JSYY}) and Musielak-Orlicz-Haj{\l}asz-Sobolev spaces $M^{1,L^{\Phi}%
}(\Omega),$ where $L^{\Phi}$ is an Orlicz space (see \cite{OO}).
Also in the doubling case, fractional spaces $M^{s,p}\left(
\Omega\right)  $ were introduced and studied in \cite{Yang} (see
also \cite{Hu} and \cite{HKT}).

For $p>1$, $M^{1,p}(\mathbb{R}^{n})=W^{1,p}\left(  \mathbb{R}^{n}\right)  $
(see \cite{Ha}), whereas for $p=1,$ $M^{1,1}(\mathbb{R}^{n})$ coincides with
the Hardy.Sobolev space $H^{1,1}(\mathbb{R}^{n})$ (see \cite[Thm 1]{KS}) and
if $0<s<1,$ then $M^{s,p}(\mathbb{R}^{n})=B_{p,\infty}^{s}(\mathbb{R}^{n})$
(see \cite{Yang}). Notice that in $\mathbb{R}^{n},$ if $s>1,$ then
$M^{s,p}(\mathbb{R}^{n})$ is trivial (contains only constant functions).
However, if $\Omega$ is a fractal, then $M^{s,p}(\mathbb{R}^{n})\ $for $s>1$
may be non-trivial (see \cite{Hu}).
\end{remark}

It is well known that the lower bound for the growth of the measure
\begin{equation}
\mu(B(x,r))\geq br^{\alpha},\label{bou1}%
\end{equation}
implies Sobolev embedding theorems for Haj{\l}asz-Sobolev spaces
$M^{1,p}$(see \cite{Ha} and \cite{Ha1}).

The converse problem, i.e. when the embedding
\begin{equation}
M^{1,p}(X)\subset L^{q}(X),\text{\ }q>p\label{emb}%
\end{equation}
implies a lower bound for the growth of the measure, has been
considered by several authors (see \cite{Go}, \cite{HKT}, \cite{KM},
\cite{Ka}, \cite{Ka1}, \cite{Ko} and the references quoted therein).
In the recent paper \cite{AGH}, R. Alvarado, P. G\'{o}rka and P.
Haj{\l}asz show that in fact if (\ref{emb}) holds with $q=\alpha
p/(\alpha-p)$, then lower bound for the growth (\ref{bou1}) holds.

The purpose of this paper is to obtain an analogous result for
$M^{s,X}$ spaces. This will be done by obtaining pointwise estimates
between the special difference $f^{\ast\ast}(t)-f^{\ast}(t)$ (called
the oscillation\footnote{Here $f^{\ast}$ is the decreasing
rearrangement of $f$, $f^{\ast\ast}(t)=\frac {1}{t}\int_0^t
f^{\ast}(s)ds$, for all $t>0$, (see Section \ref{Rea01}).} of $f$)
and the function $g$ (see Theorem \ref{the1} below), i.e. we will
see that for a wide range of measures, condition (\ref{bou1})
implies
\begin{equation}
f^{\ast\ast}(t)-f^{\ast}(t)\leq Ct^{s/\alpha}g^{\ast\ast}(t),\label{qeqd}%
\end{equation}
for every $f\in M^{s,L^{1}+L^{\infty}}$ and $g\in D^{s}(f).$ Moreover, if
$0<s\leq1,$ then (\ref{qeqd}) implies (\ref{bou1}).

Symmetrization inequalities imply Sobolev inequalities in the setting of
rearrangement invariant spaces. Indeed, from (\ref{qeqd}) we obtain: for any
r.i. space $X$ with upper Boyd\footnote{The restriction on the Boyd indices is
only required to guarantee that the inequality $\left\|  g^{\ast\ast}\right\|
_{X}\leq c_{X}\left\|  g\right\|  _{X},$ holds for all $g\in X.$} index
$\bar{\alpha}_{X}<1$, we have
\[
\left\|  t^{-s/\alpha}(f^{\ast\ast}(t)-f^{\ast}(t))\right\|  _{X}\leq
c\left\|  g\right\|  _{X},
\]
where $c=c(s,\alpha,X).$

Notice that we avoid one common drawback of the usual approaches to
Sobolev inequalities which require the choice of specific norms
before one starts the analysis. Instead, we work with pointwise
symmetrization inequalities which are *universal* and it is the
inequalities themselves that select the *correct* spaces.

For example, in the particular case of $X=L^{p}$ (see Corollary
\ref{coroLp} below) we obtain that if $1>s/\alpha>\frac{1}{p},$
then\footnote{As usual, the symbol $f\simeq g$ will indicate the
existence of a universal constant $c>0$ (independent of all
parameters involved) so that $(1/c)f\leq g\leq c\,f$, while the
symbol $f\preceq g$ means that $f\leq c\,g,$ and $f\succeq g$ means
that $f\geq c\,g.$}
\[
\left\|  t^{-s/\alpha}(f^{\ast\ast}(t)-f^{\ast}(t))\right\|  _{L^{p}}%
\simeq\left\|  t^{-s/\alpha}f^{\ast\ast}(t)\right\|  _{L^{p}}=\left\|
f\right\|  _{L^{p^{\ast},p}}%
\]
where $p_{s}^{\ast}=\frac{\alpha p}{\alpha-sp},$ i.e.
\[
\left(  \int_{0}^{\infty}\left(  t^{\frac{1}{p}-\frac{1}{\alpha}}f^{\ast\ast
}(t)\right)  ^{p}\frac{dt}{t}\right)  ^{1/p}\leq C\left(  \int_{\Omega}%
g^{p}\right)  ^{1/p}.
\]
On the other hand, since $p<p_{s}^{\ast}$ we have that
\[
L^{\frac{\alpha p}{\alpha-sp},p}\subset L^{\frac{\alpha p}{\alpha-sp}}%
\]
in particular, if $s=1$, then we get
\[
\left(  \int_{\Omega}\left|  f\right|  ^{p_{1}^{\ast}}d\mu\right)
^{1/p_{1}^{\ast}}\leq\left(  \int_{0}^{\infty}\left(  t^{\frac{1}{p}-\frac
{1}{\alpha}}f^{\ast\ast}(t)\right)  ^{p}\frac{dt}{t}\right)  ^{1/p}\leq
C\left(  \int_{\Omega}g^{p}d\mu\right)  ^{1/p}.
\]

\begin{remark}
The technique to obtain Sobolev oscillation type inequalities has
been developed by M. Milman and J. Mart\'{i}n (see \cite{MM3},
\cite{MM4} and \cite{MM6}) and provide a considerable simplification
in the theory of embeddings of Sobolev spaces based on rearrangement
invariant spaces.
\end{remark}

The paper is organized as follows. In Section 2, we introduce the
notation and the standard assumptions used in the paper, in Section
3, we will obtain oscillation type inequalities for spaces
$M^{s,X},$ we will see that they are equivalent to the lower bound
for the growth of the measure and will obtain Sobolev type embedding
of $M^{s,X}$ into a rearrangement invariant spaces. Finally, in the
appendix we will give some properties of the measures we will be
working with.

\section{Preliminaries}

In this section we establish some further notation and background
information and we provide more details about  metrics spaces and
r.i. spaces in will be working with.

\subsection{Metric spaces}

Let $(\Omega,d)$ be a metric space. As usual a ball $B$ in $\Omega$
with a center $x$ and radius $r>0$ is a set
$B=B(x,r):=\{y\in\Omega;d(x,y)<r\}$. Throughout the paper by a
metric measure space we mean a triple $(\Omega ,d,\mu)$, where $\mu$
is a Borel measure on $(\Omega,d)$ such $0<\mu (B)<\infty$, for
every ball $B$ in $\Omega$, we also assume that $\mu
(\Omega)=\infty$ and $\mu(\left\{ x\right\}  )=0$ for all
$x\in\Omega.$

We will say that a measure $\mu$ is $\alpha-$lower bounded if there are
$b,\alpha>0$ such that
\begin{equation}
\mu(B(x,r))\geq br^{\alpha},\label{boun}%
\end{equation}
For simplicity we assume in what follows that $\mu(B(x,r))\geq
r^{\alpha}.$

In what follows we will, furthermore, assume that the measure $\mu$
is continuous, i.e. $\mu$ satisfies that the map
$r\rightarrow\mu(B(x,r))$ is continuous\footnote{In the appendix we
describe measures with this property,} or that $\mu$ is doubling,
i.e. there exists a constant $C_{D}>1$ such that, for all
$x\in\Omega$ and for all $r>0,$ we have that
\begin{equation*}
\mu(B(x,2r))\leq C_{D}\mu(B(x,r))
\end{equation*}

Notice that in both cases there is a constant $c=c_{\mu}\geq1$ such
that given $t>0,$ for all $x\in\Omega,$ there is a positive number
$r(x)$ such that
\[
t\leq\mu(B(x,r(x))\leq ct.
\]
In the doubling case, given $x\in\Omega,$ consider
$r_{0}(x)=\sup\left\{ r:\mu(B(x,r))<t\right\}$ and take $r$  such
that $r<r_{0}(x)<2r,$ then
\[
t\leq\mu(B(x,2r))\leq C_{D}\mu(B(x,r))\leq C_{D}t.
\]
In what follows we call these measures $c-$almost
continuous\footnote{An example of an $\alpha-$lower bounded measure
that does not satisfy this condition is given in the appendix}.

\subsection{Background on Rearrangement Invariant Spaces\label{Rea01}}

For measurable functions $f:\Omega\rightarrow\mathbb{R},$ the distribution
function of $f$ is given by
\[
\mu_{f}(t)=\mu\{x\in{\Omega}:\left|  f(x)\right|  >t\}\text{ \ \ \ \ }(t>0).
\]
The \textbf{decreasing rearrangement} $f_{\mu}^{\ast}$ of $f$ is the
right-continuous non-increasing function from $[0,\infty)$ into $[0,\infty] $
which is equimeasurable with $f$. Namely,
\[
f_{\mu}^{\ast}(s)=\inf\{t\geq0:\mu_{f}(t)\leq s\}.
\]
We will write in what follows $f^{\ast}$ instead of $f_{\mu}^{\ast}.$

It is easy to see that for any measurable set $E\subset\Omega$
\begin{equation}
\int_{E}\left|  f(x)\right|  d\mu\leq\int_{0}^{\mu(E)}f^{\ast}%
(s)ds.\label{unoo}%
\end{equation}
Since $f^{\ast}$ is decreasing, the function $f^{\ast\ast},$ defined
by
\begin{equation}
f^{\ast\ast}(t)=\frac{1}{t}\int_{0}^{t}f^{\ast}(s)ds,\label{unoo1}
\end{equation}
is also decreasing and, moreover,
\[
f^{\ast}\leq f^{\ast\ast}.
\]

\begin{remark}
\label{oscil}An elementary computation shows that
\[
\frac{\partial}{\partial t}f^{\ast\ast}(t)=-\frac{f^{\ast\ast}(t)-f^{\ast}%
(t)}{t}%
\]
and that the function $t\rightarrow t\left(  f^{\ast\ast}(t)-f^{\ast
}(t)\right)  $ is increasing. Moreover, it is well known and easy to
see
\[
t\left(  f^{\ast\ast}(t)-f^{\ast}(t)\right)  =\int_{\{x\in{\Omega}:\left|
f(x)\right|  >f^{\ast}(t)\}}\left(  \left|  f(x)\right|  -f^{\ast}(t)\right)
dt.
\]
\end{remark}

\subsubsection{Rearrangement invariant spaces\label{rear}}

We recall briefly the basic definitions and conventions we use from
the theory of rearrangement-invariant (r.i.) spaces and refer the
reader to \cite{BS}, \cite{KPS}, for a complete treatment. We say
that a Banach function space $X=X({\Omega})$ on $({\Omega},d,\mu)$
is rearrangement-invariant (r.i.) space, if $g\in X$ implies that
all $\mu-$measurable functions $f$ with the same decreasing
rearrangement function with respect to the measure $\mu$, i.e. such
that $f^{\ast}=g^{\ast},$ also belong to $X,$ and, moreover, $\Vert
f\Vert _{X}=\Vert g\Vert_{X}$.

For any r.i. space $X({\Omega})$ we have%

\begin{equation*}
L^{\infty}(\Omega)\cap L^{1}(\Omega)\subset X(\Omega)\subset L^{1}%
(\Omega)+L^{\infty}(\Omega),
\end{equation*}
with continuous embedding.

A r.i. space $X({\Omega})$ can be represented by an r.i. space on
the interval $(0,\mu(\Omega)),$ with Lebesgue measure,
$\bar{X}=\bar{X}(0,\mu(\Omega)),$ such that
\[
\Vert f\Vert_{X}=\Vert f^{\ast}\Vert_{\bar{X}},
\]
for every $f\in X.$ A characterization of the norm $\Vert\cdot\Vert_{\bar{X}}$
is available (see \cite[Theorem 4.10 and subsequent remarks]{BS}). Typical
examples of r.i. spaces are the $L^{p}$-spaces, Lorentz spaces and Orlicz spaces.

The associated space $X^{\prime}(\Omega)$ of $X(\Omega)$ is the r.i. space of
all measurable functions $h$ for which the r.i. norm given by
\begin{equation}
\left\|  h\right\|  _{X^{\prime}(\Omega)}=\sup_{g\neq0}\frac{\int_{\Omega
}\left|  g(x)h(x)\right|  d\mu}{\left\|  g\right\|  _{X(\Omega)}%
}\label{holhol}%
\end{equation}
is finite. Note that by the definition (\ref{holhol}), the generalized
H\"{o}lder inequality%

\begin{equation}
\int_{\Omega}\left|  g(x)h(x)\right|  d\mu\leq\left\|  g\right\|  _{X(\Omega
)}\left\|  h\right\|  _{X^{\prime}(\Omega)}\label{holin}%
\end{equation}
holds.

The \textbf{fundamental function\ }of $X$ is defined by
\[
\phi_{X}(s)=\left\|  \chi_{E}\right\|  _{X},
\]
where $E$ is any measurable subset of $\Omega$ with $\mu(E)=s.$ We can assume
without loss of generality that $\phi_{X}$ is concave. Moreover,
\begin{equation}
\phi_{X^{\prime}}(s)\phi_{X}(s)=s.\label{dual}%
\end{equation}
Classically conditions on r.i. spaces are given in terms of the
Hardy defined by
\[
P^{(q)}f(t)=\left(  \dfrac{1}{t}\int_{0}^{t}\left|  f(x)\right|
^{q}\,\right)  ^{1/q},\text{ }Q_{\lambda}^{(q)}f(t)=\left(  \dfrac
{1}{t^{\lambda}}\int_{t}^{\infty}\left|  f(x)\right|  ^{q}\,\dfrac
{dx}{x^{1-\lambda}}\right)  ^{1/q},\text{ }t>0,
\]
here $0<q<\infty,\ 0\leq\lambda<1.$

The boundedness of these operators on r.i. spaces can be described in terms of
the so called \textbf{Boyd indices}\footnote{Introduced by D.W. Boyd in
\cite{boyd}.} defined by
\[
\bar{\alpha}_{X}=\inf\limits_{s>1}\dfrac{\ln h_{X}(s)}{\ln s}\text{ \ \ and
\ \ }\underline{\alpha}_{X}=\sup\limits_{s<1}\dfrac{\ln h_{X}(s)}{\ln s},
\]
where $h_{X}(s)$ denotes the norm of the compression/dilation operator $E_{s}
$ on $\bar{X}$, defined for $s>0,$ by $E_{s}f(t)=f^{\ast}(\frac{t}{s}) $. For
example if $X=L^{p}$ with $p>1$, then $\bar{\alpha}_{X}=\underline{\alpha}%
_{X}=\frac{1}{p}$. It is well known that (see \cite{MM1}, and \cite{MS})
\begin{equation}
\overline{\alpha}_{X}<\frac{1}{q}\Leftrightarrow P^{(q)}\text{ is bounded on
}X\text{,}\label{mm}%
\end{equation}%
\begin{equation}
\underline{\alpha}_{X}>\frac{\lambda}{q}\Leftrightarrow Q_{\lambda}%
^{(q)}\text{ is bounded on }X.\label{mmm}%
\end{equation}

\section{Symmetrization inequalities and embeddings for Fractional
Haj{\l}asz-Sobolev spaces\label{sime}}

The method of proof of the following Theorem follows the ideas of
\cite[Theorem 2]{MMPams} (see also \cite{Mas}).

\begin{theorem}
\label{the1}Let $\left(  \Omega,d,\mu\right)  $ be a metric measure
space such that $\mu$ is $c-$almost continuous and $\alpha-$lower
bounded. Let $f\in M^{s,L^{1}+L^{\infty}}\ $and $g\in D^{s}(f).$ Let
$0<p\leq1.$ Then, for all $t>0,$ we have
\begin{equation}
\left(  \left(  \left|  f\right|  ^{p}\right)  ^{\ast\ast}(t)-\left(  \left|
f\right|  ^{p}\right)  ^{\ast}(t)\right)  ^{1/p}\leq Ct^{s/\alpha}\left(
\left(  g^{p}\right)  ^{\ast\ast}(t)\right)  ^{1/p}\text{,}\label{jaja1}%
\end{equation}
where $C=C(c,p)$ is a constant that just depends on $c$ and $p.$
\end{theorem}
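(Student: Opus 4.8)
The plan is to control the oscillation quantity

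$$t\bigl((|f|^p)^{\ast\ast}(t)-(|f|^p)^{\ast}(t)\bigr) = \int_{\{|f|^p > (|f|^p)^\ast(t)\}} \bigl(|f(x)|^p - (|f|^p)^\ast(t)\bigr)\,d\mu(x),$$

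where I have used the integral identity from Remark \ref{oscil}. Let me write $\lambda = (|f|^p)^\ast(t) = (f^\ast(t))^p$ and set $A = \{x : |f(x)|^p > \lambda\} = \{x : |f(x)| > f^\ast(t)\}$, which satisfies $\mu(A) \le t$. The key point is that on $A$ the function $|f|$ exceeds the value $f^\ast(t)$, and I want to bound $\int_A(|f|^p - \lambda)\,d\mu$ by something of the form $t^{1+s/\alpha}\bigl((g^p)^{\ast\ast}(t)\bigr)$. The first step is to exploit the elementary inequality $a^p - b^p \le (a-b)^p$ valid for $a \ge b \ge 0$ when $0 < p \le 1$, so that $|f(x)|^p - \lambda = |f(x)|^p - (f^\ast(t))^p \le (|f(x)| - f^\ast(t))^p$ pointwise on $A$. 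This reduces everything to estimating $\int_A (|f(x)| - f^\ast(t))^p\,d\mu$.

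\smallskip

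Next I would use the $s$-gradient inequality \eqref{sgradi} to bound the pointwise oscillation of $f$. The idea, following the approach of \cite{MMPams}, is that for $x \in A$ the value $|f(x)|$ is large while outside $A$ (on a set of comparable measure) $f$ is controlled, and one can connect a point $x$ to a suitable reference region using the $c$-almost continuity together with the $\alpha$-lower bound. Concretely, for each $x \in A$ I would choose, using the $c$-almost continuity property, a ball $B(x,r(x))$ with $t \le \mu(B(x,r(x))) \le ct$; since $\mu(A) \le t \le \mu(B(x,r(x)))$, the complement of $A$ meets this ball in a set of positive measure. Averaging the gradient inequality over points $y$ in $B(x,r(x))\setminus A$ (where $|f(y)| \le f^\ast(t)$, so $|f(x)| - f^\ast(t) \le |f(x)| - |f(y)|$) gives

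$$|f(x)| - f^\ast(t) \le |f(x)-f(y)| \le d(x,y)^s(g(x)+g(y)) \le r(x)^s(g(x)+g(y)).$$

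Using the lower bound $r(x)^\alpha \le \mu(B(x,r(x))) \le ct$ yields $r(x)^s \le (ct)^{s/\alpha}$, so $|f(x)| - f^\ast(t) \le (ct)^{s/\alpha}(g(x)+g(y))$, and after averaging the $y$-dependence over the ball one obtains a bound by $(ct)^{s/\alpha}$ times a quantity built from $g(x)$ and the average of $g$ over $B(x,r(x))$.

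\smallskip

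With the pointwise bound in hand I would raise to the $p$-th power and integrate over $x \in A$. The factor $(ct)^{ps/\alpha}$ comes out, and the remaining task is to show $\int_A (g(x) + (\text{ball average of }g))^p\,d\mu \preceq t\,(g^p)^{\ast\ast}(t)$. Here subadditivity of $x \mapsto x^p$ for $0 < p \le 1$ splits the integrand, and the term $\int_A g(x)^p\,d\mu \le \int_0^{\mu(A)}(g^p)^\ast(s)\,ds \le t\,(g^p)^{\ast\ast}(t)$ follows directly from \eqref{unoo} and the definition \eqref{unoo1} of $f^{\ast\ast}$. The average-of-$g$ term requires more care: one must control $\int_A \bigl(\text{avg of } g \text{ over } B(x,r(x))\bigr)^p\,d\mu$, again by something proportional to $t\,(g^p)^{\ast\ast}(t)$, and I expect this to be the main obstacle. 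The difficulty is that the balls $B(x,r(x))$ overlap and extend beyond $A$, so a naive estimate picks up contributions from $g$ on regions of total measure much larger than $t$; the resolution should come from combining the measure bound $\mu(B(x,r(x))) \le ct$ with a careful rearrangement argument (or a covering/maximal-function estimate) showing that the relevant mass of $g^p$ is still essentially captured by $\int_0^{ct}(g^p)^\ast \preceq ct\,(g^p)^{\ast\ast}(t)$, using that $t \mapsto t\,(g^p)^{\ast\ast}(t)$ grows at most linearly. Collecting the constants, dividing by $t$, and taking $p$-th roots then gives \eqref{jaja1} with $C = C(c,p)$.
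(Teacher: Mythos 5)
Your overall architecture matches the paper's: the oscillation identity, the level set $A$ with $\mu(A)\le t$, balls of measure comparable to $t$ produced by $c$-almost continuity, the complementary set inside the ball where $|f|\le f^{\ast}(t)$, the $s$-gradient inequality, and the observation that $r(x)^{\alpha}\le\mu(B(x,r(x)))\le ct$ forces $r(x)^{s}\preceq t^{s/\alpha}$ (the paper achieves the same thing slightly differently, by taking $r=\min((2t)^{1/\alpha},r(x))$). However, there is a genuine gap, and it sits exactly where you say you ``expect the main obstacle'' to be. You first average the ungraded inequality $|f(x)|-f^{\ast}(t)\le r(x)^{s}(g(x)+g(y))$ over $y$, obtaining a pointwise bound involving the \emph{average of $g$} over the ball, and only afterwards raise to the power $p$. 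For $0<p<1$ the map $u\mapsto u^{p}$ is concave, so Jensen gives $\bigl(\operatorname{avg}_{B}g\bigr)^{p}\ge\operatorname{avg}_{B}(g^{p})$ --- the wrong direction --- and the ratio is unbounded (take $g=\chi_{F}$ with $\operatorname{avg}_{B}\chi_{F}=\delta$ small: the ratio is $\delta^{p-1}$). Consequently the estimate you would need, $\int_{A}\bigl(\operatorname{avg}_{B(x,r(x))}g\bigr)^{p}d\mu\preceq t\,(g^{p})^{\ast\ast}(t)$, is false in general, and no covering or maximal-function argument will rescue it; completing your route only yields the strictly weaker bound $Ct^{s/\alpha}g^{\ast\ast}(t)$ in place of $Ct^{s/\alpha}\bigl((g^{p})^{\ast\ast}(t)\bigr)^{1/p}$ (and this weaker form is not enough for the paper's later application to r.i.\ spaces with $\bar{\alpha}_{X}=1$). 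The cure is to reverse the order of operations, which is what the paper does: apply $\bigl||a|^{p}-|b|^{p}\bigr|\le|a-b|^{p}$ and the subadditivity $(u+v)^{p}\le u^{p}+v^{p}$ \emph{inside} the double integral in $x$ and $y$, so that only $g(x)^{p}+g(y)^{p}$ is ever averaged; then (\ref{unoo}) gives $\int_{B(x,r)}g^{p}d\mu\le\int_{0}^{ct}(g^{\ast})^{p}\preceq t\,(g^{p})^{\ast\ast}(t)$ and the correct form drops out with no further work.

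A second, smaller point: normalizing the balls by $t\le\mu(B(x,r(x)))\le ct$ only gives $\mu(B(x,r(x))\setminus A)\ge t-t=0$, so you cannot conclude that the complement of $A$ meets the ball in positive measure, let alone in measure comparable to $t$ (which your averaging in $y$ requires). The paper chooses the radius so that $2t\le\mu(B(x,r))\le 2ct$, whence the good set $A_{x}$ has $\mu(A_{x})\ge t$; you should do the same.
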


\begin{proof}
Let $0<p\leq1$, $f\in M^{s,L^{1}+L^{\infty}}(\Omega,d,\mu)$ and
$g\in D^{s}(f).$ Take $t>0$ and let
\[
A=\left\{  x\in\Omega:\left|  f(x)\right|  ^{p}>\left(  \left|  f\right|
^{p}\right)  ^{\ast}(t)\right\}  .
\]
Given $x\in A,$ since $\mu$ is $c-$almost continuous, there is a
radius $r(x) $
such that $\ $%
\[
2t\leq\mu(B(x,r(x)))\leq2ct.
\]
Let $r=\min((2t)^{1/\alpha},r(x)),$ and for every $x\in A,$ set
\[
A_{x}=\left\{  y\in B(x,r):\left|  f(y)\right|  ^{p}\leq\left(  \left|
f\right|  ^{p}\right)  ^{\ast}(t)\right\}  .
\]
From
\[
B(x,r)=\left(  B(x,r)\cap A\right)  \cup A_{x}%
\]
we see that
\begin{align*}
2t  & \leq\mu(B(x,r))\leq\mu(A)+\mu(A_{x})\\
& \leq t+\mu(A_{x}),
\end{align*}
whence
\begin{equation}
t\leq\mu(A_{x}).\label{dos}%
\end{equation}
Then
\begin{align*}
I  & =\int_{A}\left(  \left|  f(x)\right|  ^{p}-\left(  \left|  f\right|
^{p}\right)  ^{\ast}(t)\right)  d\mu(x)\\
& \leq\int_{A}\left(  \left|  f(x)\right|  ^{p}-\frac{1}{\mu\left(
A_{x}\right)  }\int_{A_{x}}\left|  f(y)\right|  ^{p}d\mu(y)\right)  d\mu(x)\\
& =\int_{A}\left(  \frac{1}{\mu\left(  A_{x}\right)  }\int_{A_{x}}\left(
\left|  f(x)\right|  ^{p}-\left|  f(y)\right|  ^{p}\right)  d\mu(y)\right)
d\mu(x)\\
& \leq\frac{1}{t}\int_{A}\int_{A_{x}}\left|  \left|  f(x)\right|  ^{p}-\left|
f(y)\right|  ^{p}\right|  d\mu(y)d\mu(x)\text{ \ (by (\ref{dos}))}\\
& \leq\frac{1}{t}\int_{A}\int_{B(x,r)}\left|  \left|  f(x)\right|
^{p}-\left|  f(y)\right|  ^{p}\right|  d\mu(y)d\mu(x)\text{ \ (since}%
A_{x}\subset B(x,r)\text{)}\\
& =J.
\end{align*}
Taking into account that
\[
\left|  \left|  x\right|  ^{p}-\left|  y\right|  ^{p}\right|  \leq\left|
x-y\right|  ^{p},
\]
we get
\begin{align*}
J  & \leq\frac{1}{t}\int_{A}\int_{B(x,r)}\left|  f(x)-f(y)\right|  ^{p}%
d\mu(y)d\mu(x)\\
& \leq\frac{1}{t}\int_{A}\int_{B(x,r)}d(x,y)^{sp}\left(  g(x)+g(y)\right)
^{p}d\mu(y)d\mu(x).\\
& \leq\frac{1}{t}\int_{A}\int_{B(x,r)}r^{sp}\left(  g(x)^{p}+g(y)^{p}\right)
d\mu(y)d\mu(x)\text{\ \ }\\
& \leq\frac{\left(  2t\right)
^{{sp/\alpha}}}{t}\int_{A}\int_{B(x,r)}\left(
g(x)^{p}+g(y)^{p}\right)  d\mu(y)d\mu(x)\\
& \leq\frac{\left(  2t\right)  ^{{sp/\alpha}}}{t}\left(  \int_{A}%
\int_{B(x,r)}g(x)^{p}d\mu(y)d\mu(x)+\int_{A}\int_{B(x,r)}g(y)^{p}d\mu
(y)d\mu(x)\right) \\
& \leq\frac{\left(  2t\right)  ^{{sp/\alpha}}}{t}\left(  \int_{A}g(x)^{p}%
\mu\left(  B(x,r)\right)  d\mu(x)+\int_{A}\left(  \int_{0}^{\mu\left(
B(x,r)\right)  }g^{\ast}(z)dz\right)  d\mu(x)\right) \text{ \ (by (\ref{unoo}))}\\
& \leq\frac{\left(  2t\right)  ^{{sp/\alpha}}}{t}\left(  ct\int_{0}%
^{t}g^{\ast}(z)^{p}dz+\int_{A}\left(  \int_{0}^{ct}g^{\ast}(z)^{p}dz\right)
d\mu(x)\right) \\
& \leq\frac{\left(  2t\right)  ^{{sp/\alpha}}}{t}\left(  ct\int_{0}%
^{t}g^{\ast}(z)^{p}dz+t\int_{0}^{ct}g^{\ast}(z)^{p}dz\right) \text{ \ (by (\ref{unoo1}))}\\
& \leq c2^{s/\alpha+1}t^{{sp/\alpha}}\int_{0}^{t}g^{\ast}(z)^{p}dz.
\end{align*}
Finally, the formula (see Remark \ref{oscil})
\[
t\left(  \left(  \left|  f\right|  ^{p}\right)  ^{\ast\ast}(t)-\left(  \left|
f\right|  ^{p}\right)  ^{\ast}(t)\right)  =\int_{A}\left(  \left|
f(x)\right|  ^{p}-\left(  \left|  f\right|  ^{p}\right)  ^{\ast}(t)\right)
d\mu(x)
\]
yields
\[
\left(  \left(  \left|  f\right|  ^{p}\right)  ^{\ast\ast}(t)-\left(
\left| f\right|  ^{p}\right)  ^{\ast}(t)\right)  ^{1/p}\leq
Ct^{s/\alpha}\left( \left(  g^{p}\right)  ^{\ast\ast}(t)\right)
^{1/p},
\]
as we wished to show.
\end{proof}

\begin{theorem}
Let $\left(  \Omega,d,\mu\right)  $ be a metric measure space such
that $\mu$ is $c- $almost continuous. Then the following statements
are equivalent

\begin{enumerate}
\item $\mu$ is $\alpha-$lower bounded.

\item  If $0<s\leq1,$ then for every $f\in M^{s,L^{1}+L^{\infty}}$ and $g\in
D^{s}(f)$, we have that
\begin{equation}
f^{\ast\ast}(t)-f^{\ast}(t)\leq Ct^{s/\alpha}g^{\ast\ast}(t)\text{.}%
\label{qeq}%
\end{equation}
\end{enumerate}
\end{theorem}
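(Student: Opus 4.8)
The plan is to prove the two implications separately, the first being essentially free and the second containing all the work.

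For $(1)\Rightarrow(2)$ there is nothing new to do. Since $\mu$ is $c$-almost continuous and $\alpha$-lower bounded, Theorem~\ref{the1} applies, and taking $p=1$ in \eqref{jaja1} gives exactly \eqref{qeq} (recall that $f^{\ast}=|f|^{\ast}$ and $f^{\ast\ast}=|f|^{\ast\ast}$). So the entire content is the converse $(2)\Rightarrow(1)$, which I would prove by testing \eqref{qeq} on a well-chosen family of functions. Fix $x_{0}\in\Omega$ and $\rho>0$, pick any $R>\rho$, write $B=B(x_{0},R)$, $m=\mu(B)$, and set
\[
f(x)=\left(R^{s}-d(x,x_{0})^{s}\right)_{+}.
\]
The first key step is to verify that $g=\chi_{B}\in D^{s}(f)$. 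This is exactly where the restriction $0<s\le1$ is used: for nonnegative reals one has $|a^{s}-b^{s}|\le|a-b|^{s}$ and $a^{s}-b^{s}\le(a-b)^{s}$ (subadditivity of $t\mapsto t^{s}$), and combined with $|d(x,x_{0})-d(y,x_{0})|\le d(x,y)$ these yield $|f(x)-f(y)|\le d(x,y)^{s}(\chi_{B}(x)+\chi_{B}(y))$ after a short case analysis (both $x,y\in B$; exactly one in $B$; both outside). Since $f$ and $g$ are bounded and supported in a ball of finite measure, $f\in M^{s,L^{1}+L^{\infty}}$ and \eqref{qeq} is available; moreover $g^{\ast}=\chi_{[0,m)}$, so $g^{\ast\ast}(t)=\min(1,m/t)=1$ for $0<t\le m$.

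The second step extracts a closed inequality for the volume function $V(u):=\mu(B(x_{0},u))$. Putting $t=V(\rho)$, the identity $\mu_{f}(R^{s}-\rho^{s})=\mu(B(x_{0},\rho))=t$ gives $f^{\ast}(t)\le R^{s}-\rho^{s}$, hence $\{f>f^{\ast}(t)\}\supseteq B(x_{0},\rho)$. Using the formula from Remark~\ref{oscil} together with a layer-cake computation, I obtain
\[
t\left(f^{\ast\ast}(t)-f^{\ast}(t)\right)=\int_{\{f>f^{\ast}(t)\}}\left(f-f^{\ast}(t)\right)d\mu\ge\int_{B(x_{0},\rho)}\left(\rho^{s}-d(x,x_{0})^{s}\right)d\mu=s\int_{0}^{\rho}u^{s-1}V(u)\,du.
\]
On the other hand $t\le m$, so \eqref{qeq} gives $t(f^{\ast\ast}(t)-f^{\ast}(t))\le Ct^{1+s/\alpha}g^{\ast\ast}(t)=CV(\rho)^{1+s/\alpha}$. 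Since $R>\rho$ was arbitrary the parameter $R$ has disappeared, and we are left with, for every $x_{0}$ and every $\rho>0$,
\[
s\int_{0}^{\rho}u^{s-1}V(u)\,du\le C\,V(\rho)^{1+s/\alpha}.
\]

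The final step is a Bihari-type bootstrap. Writing $W(\rho)=\int_{0}^{\rho}u^{s-1}V(u)\,du$ (so $W(0^{+})=0$, $W(\rho)>0$ for $\rho>0$, and $W'=\rho^{s-1}V$ a.e.), the inequality reads $V(\rho)\ge(sW(\rho)/C)^{\alpha/(\alpha+s)}$, whence
\[
W'(\rho)=\rho^{s-1}V(\rho)\ge(s/C)^{\alpha/(\alpha+s)}\rho^{s-1}W(\rho)^{\alpha/(\alpha+s)}.
\]
Separating variables and integrating from $0$ to $\rho$ (the exponent $1-\frac{\alpha}{\alpha+s}=\frac{s}{\alpha+s}$ is positive, so the boundary term at $0$ vanishes) gives $W(\rho)\ge c\,\rho^{\alpha+s}$, and feeding this back into $V(\rho)\ge(sW(\rho)/C)^{\alpha/(\alpha+s)}$ yields $V(\rho)=\mu(B(x_{0},\rho))\ge c'\rho^{\alpha}$, which is precisely \eqref{boun}. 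I expect the main obstacle to be the first two steps, namely verifying that $\chi_{B}$ is genuinely an $s$-gradient (the case analysis, where $s\le1$ is indispensable) and arranging the test function so that the volume-independent integral inequality emerges; once that is in hand, the differential-inequality bootstrap is routine.
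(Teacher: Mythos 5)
Your proposal is correct, and the forward implication is handled exactly as in the paper (Theorem \ref{the1} with $p=1$). For the converse, however, you take a genuinely different route. The paper tests \eqref{qeq} on $f_{r,x_0}(x)=\bigl((r-d(x_0,x))_{+}\bigr)^{s}$ with the same gradient $\chi_{B(x_0,r)}$ (verified, as in your case, via the subadditivity of $t\mapsto t^{s}$ for $0<s\le 1$), but then \emph{integrates} the oscillation inequality against $dt/t$ over $(0,2\mu(B(x_0,r)))$, using the identity $f^{\ast\ast}(0)-f^{\ast\ast}(T)=\int_0^T\bigl(f^{\ast\ast}(t)-f^{\ast}(t)\bigr)\frac{dt}{t}$: the left side is at least $\|f_{r,x_0}\|_{L^{\infty}}-\frac{1}{2\mu(B)}\|f_{r,x_0}\|_{L^{1}}\ge r^{s}/2$, while the right side is $\preceq \mu(B(x_0,r))^{s/\alpha}$ by a direct computation of $\int_0^{2\mu(B)}t^{s/\alpha-1}g^{\ast\ast}(t)\,dt$, so the bound $\mu(B(x_0,r))\succeq r^{\alpha}$ drops out in one line with no differential inequality. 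You instead use the slightly different profile $(R^{s}-d(x,x_0)^{s})_{+}$, evaluate the oscillation inequality at the single point $t=\mu(B(x_0,\rho))$ via the formula of Remark \ref{oscil}, and obtain the closed integral inequality $s\int_0^{\rho}u^{s-1}V(u)\,du\le C\,V(\rho)^{1+s/\alpha}$, which you then resolve by a Bihari-type bootstrap. Both arguments are sound (your layer-cake computation, the estimate $f^{\ast}(V(\rho))\le R^{s}-\rho^{s}$, and the separation-of-variables step with vanishing boundary term at $0$ all check out); what the paper's approach buys is brevity and the avoidance of any ODE machinery, while yours buys the intermediate quantitative volume inequality, at the cost of the extra bootstrap step.
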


\begin{proof}
If $(1)$ holds, then by Theorem \ref{the1} we get $(2)$. Assume that
(\ref{qeq}) holds. Fix $x_{0}\in\Omega$ and $r>0$. Define the
function $f_{r},_{x_{0}}$ by
\[
f_{r},_{x_{0}}(x)=\left\{
\begin{array}
[c]{cc}%
\left(  r-d(x_{0},x)\right)  ^{s} & \text{ if }d(x_{0},x)\leq r,\\
0 & \text{if }d(x_{0},x)>r
\end{array}
\right.
\]
It is easy seem that $g_{r},_{x_{0}}(x)=\chi_{B(x_{0},r)}$, satisfies that
\[
\left|  f_{r},_{x_{0}}(x)-f_{r},_{x_{0}}(y)\right|  \leq d(x,y)^{s}\left(
g_{r},_{x_{0}}(x)+g_{r},_{x_{0}}(y)\right)  .
\]
An elementary computation shows that
\begin{subequations}
\begin{equation}
\mu_{f_{r},_{x_{0}}}(\lambda)=\left\{
\begin{array}
[c]{cc}%
\mu\left(  B(x_{0},r-\lambda^{1/s}\right)  & \text{if }0<\lambda\leq r^{s},\\
0 & \text{if }\lambda>r^{s}.
\end{array}
\right. \label{dist}%
\end{equation}
By hypothesis,
\end{subequations}
\[
\left(  f_{r},_{x_{0}}\right)  ^{\ast\ast}(t)-\left(  f_{r},_{x_{0}}\right)
^{\ast}(t)\leq Ct^{s/\alpha}\left(  g_{r},_{x_{0}}\right)  ^{\ast\ast}(t)
\]
Thus,
\begin{align*}
\left(  f_{r},_{x_{0}}\right)  ^{\ast\ast}(0)-\left(  f_{r},_{x_{0}}\right)
^{\ast\ast}(2\mu(B(x_{0},r)))  & =\int_{0}^{2\mu(B(x_{0},r))}\left(  \left(
f_{r},_{x_{0}}\right)  ^{\ast\ast}(t)-\left(  f_{r},_{x_{0}}\right)  ^{\ast
}(t)\right)  \frac{dt}{t}\\
& \leq C\int_{0}^{2\mu(B(x_{0},r))}t^{s/\alpha-1}\left(  g_{r},_{x_{0}%
}\right)  ^{\ast\ast}(t)dt.
\end{align*}
But
\[
\left(  g_{r},_{x_{0}}\right)  ^{\ast\ast}(t)=\frac{1}{t}\int_{0}^{t}%
\chi_{\lbrack0,\mu(B(x_{0},r))}(s)ds=\min\left(  1,\frac{\mu(B(x_{0},r))}%
{t}\right)  ,
\]
thus
\begin{align*}
\int_{0}^{2\mu(B(x_{0},r))}t^{s/\alpha-1}\left(  g_{r},_{x_{0}}\right)
^{\ast\ast}(t)dt  & =\int_{0}^{\mu(B(x_{0},r))}t^{s/\alpha-1}dt+\mu
(B(x_{0},r))\int_{\mu(B(x_{0},r))}^{2\mu(B(x_{0},r))}t^{s/\alpha-2}dt\\
& \preceq\mu(B(x_{0},r))^{s/\alpha}.
\end{align*}
On the other hand
\begin{equation}
\left(  f_{r},_{x_{0}}\right)  ^{\ast\ast}(0)=\left\|  f_{r},_{x_{0}}\right\|
_{L^{\infty}}=r^{s}\label{Linf}%
\end{equation}
so from (\ref{dist}) we get that $\left(  f_{r},_{x_{0}}\right)
^{\ast}(t)=0 $ if $t>\mu(B(x_{0},r),$ therefore
\begin{align}
\left(  f_{r},_{x_{0}}\right)  ^{\ast\ast}(2\mu(B(x_{0},r)))  & =\frac{1}%
{2\mu(B(x_{0},r)}\int_{0}^{2\mu(B(x_{0},r)}\left(  f_{r},_{x_{0}}\right)
^{\ast}(t)dt\label{L1}\\
& =\frac{1}{2\mu(B(x_{0},r)}\int_{0}^{\mu(B(x_{0},r)}\left(  f_{r},_{x_{0}%
}\right)  ^{\ast}(t)dt\nonumber\\
& =\frac{1}{2\mu(B(x_{0},r)}\left\|  f_{r},_{x_{0}}\right\|  _{L^{1}%
}\nonumber\\
& \leq\frac{1}{2\mu(B(x_{0},r)}\left(  r^{s}\mu(B(x_{0},r)\right)
=\frac{r^{s}}{2}.\nonumber
\end{align}
Combining (\ref{Linf}) and (\ref{L1}) we obtain
\[
\frac{r^{s}}{2}\preceq\mu(B(x_{0},r))^{s/\alpha}.
\]
which implies that $\mu$ is $\alpha-$lower bounded up to constants.
\end{proof}

Theorem \ref{sime} provides us the following Sobolev embedding
result for Fractional Haj{\l}asz-Sobolev spaces.

\begin{theorem}
Let $\left(  \Omega,d,\mu\right)  $ be a metric measure space such
that $\mu$ is $c- $almost continuous and $\alpha-$lower bounded. Let
$X$ be a r.i. space. Let $f\in M^{s,X}$ and $g\in D^{s}(f)$
\begin{enumerate}
\item  If $s/\alpha<1$,

\begin{enumerate}
\item  If $\underline{\alpha}_{X}>s/\alpha,$ then
\[
\left\|  t^{-s/\alpha}f^{\ast\ast}(t)\right\|  _{\bar{X}}\preceq\left\|
g\right\|  _{X}.
\]

\item  If $\bar{\alpha}_{X}<s/\alpha,$ then
\[
\left\|  f\right\|  _{L^{\infty}}\leq\left\|  g\right\|  _{X}+\left\|
f\right\|  _{L^{1}+L^{\infty}}.
\]
\
\end{enumerate}

\item  If $s/\alpha=1,$ then
\[
\sup_{t>0}\phi_{X}(t)\frac{\left(  f^{\ast\ast}(t)-f^{\ast}(t)\right)  }%
{t}\preceq\left\|  g\right\|  _{X}.
\]

\item If $s/\alpha>1,$ then
\[
\left\|  f\right\|  _{L^{\infty}}\leq\left\|  g\right\|  _{X}+\left\|
f\right\|  _{L^{1}+L^{\infty}}%
\]
\end{enumerate}
\end{theorem}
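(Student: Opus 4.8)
The plan is to derive all four conclusions from the single pointwise inequality obtained by taking $p=1$ in Theorem \ref{the1}, namely $f^{\ast\ast}(t)-f^{\ast}(t)\le Ct^{s/\alpha}g^{\ast\ast}(t)$, together with the differential identity of Remark \ref{oscil}. Writing $\lambda=s/\alpha$, these combine to give the master estimate $-\frac{\partial}{\partial t}f^{\ast\ast}(t)=\frac{f^{\ast\ast}(t)-f^{\ast}(t)}{t}\le Ct^{\lambda-1}g^{\ast\ast}(t)$, and each regime is obtained by integrating this over the range dictated by the position of $\lambda$ relative to $1$, then reading off the correct r.i. estimate from the Boyd indices via (\ref{mm}) and (\ref{mmm}).

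For part (1)(a) ($\lambda<1$, $\underline{\alpha}_X>\lambda$) I would integrate from $t$ to $\infty$. Since $f\in X$ and $\mu(\Omega)=\infty$, and $\underline{\alpha}_X>0$ forces the fundamental function $\phi_X$ to be unbounded, one has $f^{\ast}(\infty)=0$ and hence $f^{\ast\ast}(\infty)=0$; thus $f^{\ast\ast}(t)\le C\int_t^{\infty}u^{\lambda-1}g^{\ast\ast}(u)\,du$, i.e. $t^{-\lambda}f^{\ast\ast}(t)\le C\,Q_{\lambda}^{(1)}g^{\ast\ast}(t)$. A Fubini computation yields the Calder\'on identity $Q_{\lambda}^{(1)}g^{\ast\ast}=\frac{1}{1-\lambda}\bigl(g^{\ast\ast}+Q_{\lambda}^{(1)}g^{\ast}\bigr)$, so that $\|t^{-\lambda}f^{\ast\ast}\|_{\bar{X}}\le\frac{C}{1-\lambda}\bigl(\|g^{\ast\ast}\|_{\bar{X}}+\|Q_{\lambda}^{(1)}g^{\ast}\|_{\bar{X}}\bigr)$. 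The second term is $\preceq\|g\|_X$ because $\underline{\alpha}_X>\lambda$ makes $Q_{\lambda}^{(1)}$ bounded by (\ref{mmm}) with $q=1$; the first is $\preceq\|g\|_X$ since $g^{\ast\ast}=P^{(1)}g^{\ast}$ and $\|P^{(1)}g^{\ast}\|_{\bar{X}}\le c_X\|g\|_X$ whenever $\bar{\alpha}_X<1$, by (\ref{mm}) with $q=1$ (this is the Boyd restriction flagged in the introduction).

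For the two $L^{\infty}$ conclusions, parts (1)(b) ($\lambda<1$) and (3) ($\lambda>1$), I would instead integrate from $0$ to $1$, obtaining $\|f\|_{L^{\infty}}-f^{\ast\ast}(1)=f^{\ast\ast}(0)-f^{\ast\ast}(1)\le C\int_0^1 u^{\lambda-1}g^{\ast\ast}(u)\,du$. Writing $g^{\ast\ast}(u)=\frac1u\int_0^u g^{\ast}$ and applying Fubini turns the right-hand side into $C\int_0^1 g^{\ast}(v)\bigl(\int_v^1 u^{\lambda-2}\,du\bigr)dv$. When $\lambda>1$ the inner integral is bounded by $(\lambda-1)^{-1}$, so the quantity is $\preceq\int_0^1 g^{\ast}\le\|g\|_X\,\phi_{X^{\prime}}(1)$ by H\"older (\ref{holin}) using $\|\chi_{(0,1)}\|_{\bar{X}^{\prime}}=\phi_{X^{\prime}}(1)$, with no Boyd hypothesis needed. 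When $\lambda<1$ the inner integral is $\le v^{\lambda-1}/(1-\lambda)$, giving $\preceq\int_0^1 g^{\ast}(v)v^{\lambda-1}\,dv\le\|g\|_X\,\|v^{\lambda-1}\chi_{(0,1)}\|_{\bar{X}^{\prime}}$, and the last factor is finite because $\bar{\alpha}_X<\lambda$ forces $\underline{\alpha}_{X^{\prime}}=1-\bar{\alpha}_X>1-\lambda$. Since $f^{\ast\ast}(1)=\|f\|_{L^1+L^{\infty}}$ and $f^{\ast\ast}(0)=\|f\|_{L^{\infty}}$, both cases give $\|f\|_{L^{\infty}}\le\|g\|_X+\|f\|_{L^1+L^{\infty}}$.

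Finally, the borderline case (2) ($\lambda=1$) is the endpoint and needs no Boyd index: the master estimate reads $\frac{f^{\ast\ast}(t)-f^{\ast}(t)}{t}\le Cg^{\ast\ast}(t)$, and using $\phi_X(t)=t/\phi_{X^{\prime}}(t)$ from (\ref{dual}) together with $\int_0^t g^{\ast}\le\|g\|_X\,\phi_{X^{\prime}}(t)$ (H\"older (\ref{holin})) gives $\phi_X(t)\frac{f^{\ast\ast}(t)-f^{\ast}(t)}{t}\le C\phi_X(t)g^{\ast\ast}(t)=\frac{C}{\phi_{X^{\prime}}(t)}\int_0^t g^{\ast}\le C\|g\|_X$ uniformly in $t$, whence the supremum bound. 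I expect the main obstacle to be the bookkeeping in part (1): recognizing $t^{-\lambda}f^{\ast\ast}$ and the weighted integrals as the Calder\'on operators $Q_{\lambda}^{(1)}$ and $P^{(1)}$, and matching their boundedness to the precise hypotheses $\underline{\alpha}_X>s/\alpha$, $\bar{\alpha}_X<s/\alpha$ (and the standing $\bar{\alpha}_X<1$) through (\ref{mm})--(\ref{mmm}); the $L^{\infty}$ and endpoint cases are comparatively direct once the range of integration is chosen correctly.
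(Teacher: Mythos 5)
Your handling of parts (1)(b), (2) and (3) is correct and close to the paper's: for (2) and (3) your argument is essentially identical, and for (1)(b) you replace the paper's appeal to Theorem 2.3 of \cite{FMP} by a direct Fubini--H\"older computation (the key point, that $\|v^{\lambda-1}\chi_{(0,1)}\|_{\bar{X}^{\prime}}<\infty$ because $\underline{\alpha}_{X^{\prime}}=1-\bar{\alpha}_{X}>1-\lambda$, is sound), which is arguably more self-contained. The genuine gap is in part (1)(a). By reducing everything to the $p=1$ case of Theorem \ref{the1}, you end up having to bound \emph{both} $\|Q_{\lambda}^{(1)}g^{\ast}\|_{\bar{X}}$ and $\|g^{\ast\ast}\|_{\bar{X}}$; the first is controlled by the hypothesis $\underline{\alpha}_{X}>\lambda$, but the second requires $\bar{\alpha}_{X}<1$, which is \emph{not} among the hypotheses of (1)(a). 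Spaces with $\underline{\alpha}_{X}>0$ but $\bar{\alpha}_{X}=1$ do exist (e.g. $X=L^{1}\cap L^{2}$ on an infinite measure space has $\underline{\alpha}_{X}=1/2$ and $\bar{\alpha}_{X}=1$), and for such $X$ the inequality $\|g^{\ast\ast}\|_{\bar{X}}\preceq\|g\|_{X}$ fails, so your proof only establishes (1)(a) under the extra assumption $\bar{\alpha}_{X}<1$.

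This missing case is precisely the reason Theorem \ref{the1} is stated for all $0<p\leq1$ rather than only $p=1$. The paper's proof of (1)(a) splits into two subcases: when $\bar{\alpha}_{X}<1$ it argues essentially as you do (writing $t^{-s/\alpha}f^{\ast\ast}$ as $Q_{s/\alpha}$ of the normalized oscillation and then using $\|g^{\ast\ast}\|_{\bar{X}}\preceq\|g\|_{X}$); when $\bar{\alpha}_{X}=1$ it applies (\ref{jaja1}) to $|f|^{p}$ with $0<p<1$, identifies $t^{-s/\alpha}\bigl(\left(|f|^{p}\right)^{\ast\ast}(t)\bigr)^{1/p}$ with $Q_{sp/\alpha}^{(p)}$ applied to the corresponding oscillation, invokes (\ref{mmm}) (which still only needs $\underline{\alpha}_{X}>s/\alpha$), and finally bounds $\|\left(\left(g^{p}\right)^{\ast\ast}\right)^{1/p}\|_{\bar{X}}=\|P^{(p)}g\|_{\bar{X}}\preceq\|g\|_{X}$ via (\ref{mm}) with $q=p$, which only requires $\bar{\alpha}_{X}<1/p$ and therefore holds automatically. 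You would need to add this second subcase (or explicitly restrict the claim you are proving to $\bar{\alpha}_{X}<1$) for the argument to cover the statement as written.
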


\begin{proof}

Case (1) Assume $s/\alpha<1,$ then:

\

(a)  Condition $\underline{\alpha}_{X}>s/\alpha,$ implies
$f^{\ast\ast}(\infty)=0,$ so
\[
t^{-s/\alpha}f^{\ast\ast}(t)=t^{-s/\alpha}\int_{t}^{\infty}z^{\frac{s}{\alpha
}}\frac{\left(  f^{\ast\ast}(z)-f^{\ast}(z)\right)  }{z^{\frac{s}{\alpha}}%
}\frac{dz}{z}=Q_{\frac{s}{\alpha}}\left[  \frac{\left(  f^{\ast\ast}%
(\cdot)-f^{\ast}(\cdot)\right)  }{\left(  \cdot\right)  ^{\frac{s}{\alpha}}%
}\right]  (t).
\]
Hence
\begin{align*}
\left\|  t^{-s/\alpha}f^{\ast\ast}(t)\right\|  _{\bar{X}} &
=\left\|
Q_{\frac{s}{\alpha}}\left[  \frac{\left(  f^{\ast\ast}(\cdot)-f^{\ast}%
(\cdot)\right)  }{\left(  \cdot\right)  ^{\frac{s}{\alpha}}}\right]
(t)\right\|  _{\bar{X}}\\
&  \leq C\left\|  \left(  \frac{f^{\ast\ast}(t)-f^{\ast}(t)}{t^{s/\alpha}%
}\right)  \right\|  _{\bar{X}}\text{ (since }\underline{\alpha}_{X}%
>s/\alpha).\\
&  \leq C\left\|  t^{-s/\alpha}f^{\ast\ast}(t)\right\|  _{\bar{X}}.
\end{align*}
Therefore, if $\bar{\alpha}_{X}<1,$ we have that
\begin{align*}
\left\|  t^{-s/\alpha}f^{\ast\ast}(t)\right\|  _{\bar{X}} &
\simeq\left\| \left(
\frac{f^{\ast\ast}(t)-f^{\ast}(t)}{t^{s/\alpha}}\right)  \right\|
_{\bar{X}}\\
&  \preceq\left\|  g^{\ast\ast}(t)\right\|  _{\bar{X}}\\
&  \preceq\left\|  g\right\|  _{X}.
\end{align*}

In case that $\bar{\alpha}_{X}=1,$ let $0<p<1,$ and consider the
function $\left|  f\right|  ^{p}.$ By (\ref{jaja1}) we have that
\begin{equation}
\left(  t^{-sp/\alpha}\left(  \left|  f\right|  ^{p}\right)
^{\ast\ast
}(t)-\left(  \left|  f\right|  ^{p}\right)  ^{\ast}(t)\right)  ^{1/p}%
\preceq\left(  \left(  g^{p}\right)  ^{\ast\ast}(t)\right)  ^{1/p}.
\label{zaza}%
\end{equation}
The formula
\begin{align*}
\left(  \left|  f\right|  ^{p}\right)  ^{\ast\ast}(t)  &
=\int_{t}^{\infty }\left(  \left(  \left|  f\right|  ^{p}\right)
^{\ast\ast}(z)-\left(  \left|
f\right|  ^{p}\right)  ^{\ast}(z)\right)  \frac{dz}{z}\\
&  =\int_{t}^{\infty}z^{\frac{sp}{\alpha}}\left(  \left[  z^{-\frac{sp}%
{\alpha}}\left(  \left(  \left|  f\right|  ^{p}\right)
^{\ast\ast}(z)-\left( \left|  f\right|  ^{p}\right)
^{\ast}(z)\right)  \right]  ^{1/p}\right) ^{p}\frac{dz}{z},
\end{align*}
yields
\begin{align*}
t^{-\frac{s}{\alpha}}\left(  \left(  \left|  f\right|  ^{p}\right)
^{\ast \ast}(t)\right)  ^{1/p}  &  =\left(
t^{-\frac{sp}{\alpha}}\int_{t}^{\infty }z^{\frac{sp}{\alpha}}\left(
z^{-\frac{s}{p\alpha}}\left[  \left(  \left( \left|  f\right|
^{p}\right)  ^{\ast\ast}(z)-\left(  \left|  f\right|
^{p}\right)  ^{\ast}(z)\right)  \right]  ^{1/p}\right)  ^{p}\frac{dz}%
{z}\right)  ^{1/p}\\
&  =Q_{\frac{sp}{\alpha}}^{\left(  p\right)  }\left(  \left(
\cdot\right) ^{\frac{-s}{p\alpha}}\left[  \left(  \left(  \left|
f\right|  ^{p}\right)
^{\ast\ast}(\cdot)-\left(  \left|  f\right|  ^{p}\right)  ^{\ast}%
(\cdot)\right)  \right]  ^{1/p}\right)  (t),
\end{align*}
Since $\underline{\alpha}_{X}>\frac{s}{\alpha},$ the operator
$Q_{\frac {s}{\alpha}}^{\left(  p\right)  }$ is bounded on $X$ (by
\text{(\ref{mmm}))}, thus
\begin{align*}
\left\|  t^{-\frac{s}{\alpha}}f^{\ast\ast}\right\|  _{\bar{X}}  &
\simeq\left\|  t^{-\frac{s}{\alpha}}\left(  \left(  \left|  f\right|
^{p}\right)  ^{\ast\ast}(t)\right)  ^{1/p}\right\|  _{\bar{X}}\\
&  =\left\|  Q_{\frac{s}{p\alpha}}^{\left(  p\right)  }\left( \left(
\cdot\right)  ^{\frac{-s}{p\alpha}}\left[  \left(  \left( \left|
f\right| ^{p}\right)  ^{\ast\ast}(\cdot)-\left(  \left| f\right|
^{p}\right)  ^{\ast
}(\cdot)\right)  \right]  ^{1/p}\right)  (t)\right\|  _{\bar{X}}\\
&  \preceq\left\|  \left(  z^{-\frac{s}{p\alpha}}\left[  \left(
\left( \left|  f\right|  ^{p}\right)  ^{\ast\ast}(z)-\left(  \left|
f\right|
^{p}\right)  ^{\ast}(z)\right)  \right]  ^{1/p}\right)  \right\|  _{\bar{X}}\\
&  \preceq\left\|  \left(  \left(  g^{p}\right)
^{\ast\ast}(t)\right)
^{1/p}\right\|  _{\bar{X}}\text{ \ (by (\ref{zaza}))}\\
&  \preceq\left\|  g\right\|  _{X}\text{ \ (by (\ref{mm}))}.
\end{align*}
\qquad

(b) If $\bar{\alpha}_{X}<s/\alpha,$  by Theorem 2.3 of \cite{FMP},
we have that
\begin{align*}
\left\|  f\right\|  _{L^{\infty}}  &  \leq\left\|  \frac{f^{\ast\ast
}(t)-f^{\ast}(t)}{t^{s/\alpha}}\right\|  _{\bar{X}}+\left\|
f\right\|
_{L^{1}+L^{\infty}}\\
&  \preceq\left\|  g^{\ast\ast}(t)\right\|  _{\bar{X}}\preceq\left\|
g\right\|  _{X}.
\end{align*}

Case (2)  If $s/\alpha=1,$ then%

\[
\frac{f^{\ast\ast}(t)-f^{\ast}(t)}{t}\leq
Cg^{\ast\ast}(t)=\frac{1}{t}\int
_{0}^{t}g^{\ast}(s)ds\leq\left\|  g\right\|  _{X}\frac{\phi_{X^{\prime}}%
(t)}{t}\text{ (by (\ref{holin}))}.
\]
Consequently,
\[
\phi_{X}(t)\frac{\left(  f^{\ast\ast}(t)-f^{\ast}(t)\right)  }{t}%
\preceq\left\|  g\right\|  _{X} \text{ (by (\ref{dual}))}.
\]

Case (3) If $s/\alpha>1,$ then
\[
\frac{f^{\ast\ast}(t)-f^{\ast}(t)}{t^{s/\alpha}}\preceq\frac{1}{t}\int_{0}%
^{t}g^{\ast}(s)ds\leq\left\|  g\right\|
_{X}\frac{\phi_{X^{\prime}}(t)}{t}.
\]
On the other hand
\begin{align*}
f^{\ast\ast}(0)-f^{\ast\ast}(1)  &  =\int_{0}^{1}\left(  f^{\ast\ast
}(z)-f^{\ast}(z)\right)
\frac{dz}{z}=\int_{0}^{1}z^{s/\alpha-1}\left(
\frac{f^{\ast\ast}(z)-f^{\ast}(z)}{z^{s/\alpha}}\right)  dz\\
&  \preceq\left\|  g\right\|  _{X}\int_{0}^{1}z^{s/\alpha-1}\frac
{\phi_{X^{\prime}}(z)}{z}dz\\
&  \leq\left\|  g\right\|
_{X}\phi_{X^{\prime}}(1)\int_{0}^{1}z^{s/\alpha
-2}dz\\
&  \preceq\left\|  g\right\|  _{X}.
\end{align*}
Finally, using that $\left\|  f\right\|
_{L^{\infty}}=f^{\ast\ast}(0)$, we obtain
\[
\left\|  f\right\|  _{L^{\infty}}\leq\left\|  g\right\|
_{X}+\left\| f\right\|  _{L^{1}+L^{\infty}}.
\]
As we wished to show
\end{proof}

\begin{remark}
In the classical setting, embedding theorems for $W^{1,}{}^{p}(\mathbb{R}%
^{n})$ have different behavior when $p<n,$ $p=n,$or $p>n$. The point is that
\begin{equation}
\left|  B(x,r)\right|  \simeq r^{n},\label{jaja}.
\end{equation}
In the metric-measure context the counterpart condition (\ref{jaja})
is provided by the lower bound for the growth of the measure
(\ref{boun}). Notice also the different character on the embeddings
for $p<n,$ $p=n,$ or $p>n$ in the r.i. context is done by the role
of the Boyd index.
\end{remark}

\begin{corollary}
\label{coroLp}Under conditions of the previous Theorem, in the particular case
of $X=L^{p}$, we obtain

\begin{enumerate}
\item  If $s/\alpha<1$,

\begin{enumerate}
\item  If $s/\alpha>\frac{1}{p},$ then
\[
\left\|  f\right\|  _{L^{p^{\ast},p}}\preceq\left\|  g\right\|  _{L^{p}}.
\]
where $p^{\ast}=\frac{\alpha p}{\alpha-sp}.$

\item  If $\frac{1}{p}=s/\alpha,$ then
\[
\left(  \int_{0}^{1}\left(  \frac{f^{\ast\ast}(t)}{1+\ln\left(  \frac
{1}{t}\right)  }\right)  ^{p}\frac{dt}{t}\right)  ^{1/p}\preceq\left\|
g\right\|  _{L^{p}}+\left\|  f\right\|  _{L^{1}+L^{\infty}}.
\]

\item  If $\frac{1}{p}<s/\alpha,$ then
\[
\left\|  f\right\|  _{L^{\infty}}\leq\left\|  g\right\|  _{L^{p}}+\left\|
f\right\|  _{L^{1}+L^{\infty}}.
\]
\
\end{enumerate}

\item  If $s/\alpha=1,$ then
\[
\sup_{t>0}t^{1/p}\frac{\left(  f^{\ast\ast}(t)-f^{\ast}(t)\right)  }{t}%
\preceq\left\|  g\right\|  _{L^{p}}.
\]
In particular, if $L^{p}=L^{1},$ we obtain $\left\|  f\right\|  _{L_{w}%
^{\infty}}\preceq\left\|  f\right\|  _{M^{s,L^{p}}},$ where $L_{w}^{\infty}$
is the weak $L^{\infty}-$space\footnote{Bennett, DeVore and Sharpley
introduced the space weak $L^{\infty}$ defined as $\left\|  f\right\|
_{L_{w}^{\infty}}=\sup_{t>0}\left(  f^{\ast\ast}(t)-f^{\ast}(t)\right)  $ in
\cite{BDS} where studied its relationship with functions of bounded mean oscillation}

\item  If $s/\alpha>1,$ then
\[
\left\|  f\right\|  _{L^{\infty}}\leq\left\|  g\right\|  _{L^{p}}+\left\|
f\right\|  _{L^{1}+L^{\infty}}.
\]
\end{enumerate}
\end{corollary}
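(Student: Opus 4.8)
The plan is to obtain each case of the Corollary by specializing the preceding embedding Theorem to $X=L^{p}$, for which $\bar{\alpha}_{L^{p}}=\underline{\alpha}_{L^{p}}=1/p$, $\phi_{L^{p}}(t)=t^{1/p}$ and $\phi_{(L^{p})'}(t)=t^{1/p'}$. With these data, parts (1)(c) and (3) are literally parts (1)(b) and (3) of the Theorem; part (2) is part (2) of the Theorem; and part (1)(a) is part (1)(a) of the Theorem after one identifies the abstract norm with a Lorentz norm. The single case that does \emph{not} follow from the Theorem is the borderline exponent $s/\alpha=1/p$, where both Boyd conditions hold only with equality, and this is where the real work lies.

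For the routine cases I proceed as follows. In the regime where part (1)(a) of the Theorem applies, i.e. $\underline{\alpha}_{L^{p}}=1/p>s/\alpha$ (equivalently $sp<\alpha$, so that $p^{\ast}=\alpha p/(\alpha-sp)$ is a genuine exponent with $p<p^{\ast}<\infty$), the Theorem yields $\|t^{-s/\alpha}f^{\ast\ast}\|_{L^{p}(0,\infty)}\preceq\|g\|_{L^{p}}$. Since $1/p^{\ast}=1/p-s/\alpha$, a change in the power of $t$ gives $\|f\|_{L^{p^{\ast},p}}=\|t^{-s/\alpha}f^{\ast}\|_{L^{p}(0,\infty)}$, and using $f^{\ast}\leq f^{\ast\ast}$ pointwise I get $\|f\|_{L^{p^{\ast},p}}\leq\|t^{-s/\alpha}f^{\ast\ast}\|_{L^{p}}\preceq\|g\|_{L^{p}}$, which is (1)(a). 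The bounds (1)(c) and (3) are immediate from parts (1)(b) and (3) of the Theorem with $\bar{\alpha}_{L^{p}}=1/p$. For (2), part (2) of the Theorem with $\phi_{L^{p}}(t)=t^{1/p}$ gives $\sup_{t}t^{1/p}(f^{\ast\ast}(t)-f^{\ast}(t))/t\preceq\|g\|_{L^{p}}$; in the sub-case $p=1$ one has $\phi_{L^{1}}(t)=t$, so the supremum is $\sup_{t}(f^{\ast\ast}(t)-f^{\ast}(t))=\|f\|_{L_{w}^{\infty}}$, and taking the infimum over $g\in D^{s}(f)$ together with the definition of $\|\cdot\|_{M^{s,L^{1}}}$ gives $\|f\|_{L_{w}^{\infty}}\preceq\|f\|_{M^{s,L^{1}}}$.

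The main obstacle is the borderline case (1)(b), $s/\alpha=1/p$ (which forces $p>1$ since $s/\alpha<1$). Here I would start from the oscillation inequality of Theorem \ref{the1} taken with $p=1$, namely $f^{\ast\ast}(t)-f^{\ast}(t)\leq Ct^{s/\alpha}g^{\ast\ast}(t)=Ct^{1/p}g^{\ast\ast}(t)$, and combine it with $-\tfrac{d}{dt}f^{\ast\ast}(t)=(f^{\ast\ast}(t)-f^{\ast}(t))/t$ from Remark \ref{oscil}. Integrating from $t$ to $1$ gives, for $0<t<1$,
\[
f^{\ast\ast}(t)\leq f^{\ast\ast}(1)+C\int_{t}^{1}z^{1/p-1}g^{\ast\ast}(z)\,dz,
\]
where $f^{\ast\ast}(1)=\int_{0}^{1}f^{\ast}=\|f\|_{L^{1}+L^{\infty}}$ handles the first term (its contribution is finite because $\int_{0}^{1}(1+\ln(1/t))^{-p}\,dt/t<\infty$ for $p>1$). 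The crux is the second term against the weight $1/(1+\ln(1/t))$, and I stress that a pointwise estimate is insufficient: a direct Hölder step inside the integral only yields $f^{\ast\ast}(t)\preceq\|g\|_{L^{p}}(\ln(1/t))^{1/p'}$, whose weighted $L^{p}(dt/t)$ norm diverges logarithmically. Instead I would prove
\[
\left(\int_{0}^{1}\left(\frac{1}{1+\ln(1/t)}\int_{t}^{1}z^{1/p-1}g^{\ast\ast}(z)\,dz\right)^{p}\frac{dt}{t}\right)^{1/p}\preceq\left(\int_{0}^{\infty}g^{\ast\ast}(z)^{p}\,dz\right)^{1/p}
\]
as a genuine weighted Hardy inequality of $\int_{t}^{1}$ type, with $w(t)=(1+\ln(1/t))^{-p}t^{-1}$ and $u(z)=z^{p-1}$; its validity is the Muckenhoupt balance $\sup_{0<r<1}(\int_{0}^{r}w)^{1/p}(\int_{r}^{1}u^{1-p'})^{1/p'}<\infty$, which holds because $\int_{r}^{1}u^{1-p'}\,dz=\ln(1/r)$ while $\int_{0}^{r}w\,dt\simeq(1+\ln(1/r))^{-(p-1)}$, so the product is bounded. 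This is exactly where the logarithmic weight is forced and sharp. Finally $\|g^{\ast\ast}\|_{L^{p}}\preceq\|g\|_{L^{p}}$ (Hardy's inequality; equivalently $P^{(1)}$ is bounded on $L^{p}$ by (\ref{mm}), since $\bar{\alpha}_{L^{p}}=1/p<1$), which completes (1)(b).
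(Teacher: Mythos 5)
Your proposal is correct. The routine cases (1)(a), (1)(c), (2) and (3) are handled exactly as in the paper, namely by specializing the preceding Theorem to $X=L^{p}$ with $\underline{\alpha}_{L^{p}}=\bar{\alpha}_{L^{p}}=1/p$ and $\phi_{L^{p}}(t)=t^{1/p}$ (and you rightly read the hypothesis of (1)(a) as $s/\alpha<1/p$, i.e.\ $sp<\alpha$; as printed, the inequality in (1)(a) is a typo, since otherwise $p^{\ast}<0$ and the condition would coincide with that of (1)(c)). The only genuine divergence is in the borderline case (1)(b). The paper also starts from the oscillation inequality of Theorem \ref{the1}, deduces $\left(\int_{0}^{\infty}\left(t^{-1/p}\left(f^{\ast\ast}(t)-f^{\ast}(t)\right)\right)^{p}dt\right)^{1/p}\preceq\left\|g\right\|_{L^{p}}$, and then simply quotes Lemma 5.4 of Bastero--Milman--Ruiz \cite{BMF} to convert this $L(\infty,p)$-type bound into the logarithmically weighted estimate on $f^{\ast\ast}$. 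You instead reprove that implication from scratch: integrating $-\frac{d}{dt}f^{\ast\ast}(t)=\left(f^{\ast\ast}(t)-f^{\ast}(t)\right)/t$ from $t$ to $1$, isolating $f^{\ast\ast}(1)=\left\|f\right\|_{L^{1}+L^{\infty}}$ (whose contribution converges precisely because $p>1$), and verifying the Muckenhoupt balance condition for the resulting weighted Hardy inequality of $\int_{t}^{1}$ type. Your computation checks out: $\int_{r}^{1}u^{1-p'}dz=\ln(1/r)$ against $\int_{0}^{r}w\,dt\simeq(1+\ln(1/r))^{1-p}$ does balance, and your observation that a pointwise H\"older estimate loses a factor $(1+\ln(1/t))^{1/p'}$ and is therefore insufficient is exactly the content of the cited lemma. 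What your route buys is a self-contained argument; what the paper's buys is brevity and reuse of a general lemma.
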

\begin{proof}
Except $1.b$, the remaining statements are a particular case of the
previous Theorem.
To see 1.b, by Theorem \ref{the1}, we have that%
\[
t^{-1/p}\left(  f^{\ast\ast}(t)-f^{\ast}(t)\right)  \leq
Cg^{\ast\ast}(t).
\]
Hence%
\[
\left(  \int_{0}^{\infty}\left(  t^{-1/p}\left(
f^{\ast\ast}(t)-f^{\ast
}(t)\right)  \right)  ^{p}dt\right)  ^{1/p}\preceq\left\|  g\right\|  _{L^{p}%
},
\]
and by \cite[Lemma 5.4]{BMF} we have that
\[
\left(  \int_{0}^{1}\left(  \frac{f^{\ast\ast}(t)}{1+\ln\left( \frac
{1}{t}\right)  }\right)  ^{p}\frac{dt}{t}\right) ^{1/p}\preceq\left(
\int_{0}^{\infty}\left(  t^{-1/p}\left(
f^{\ast\ast}(t)-f^{\ast}(t)\right) \right)  ^{p}dt\right)
^{1/p}+\left\|  f\right\|  _{L^{1}+L^{\infty}}.
\]
\end{proof}

\section{Appendix}

\subsection{Description of measures $\mu$ satisfying that the map
$r\rightarrow\mu(B(x,r))$ is continuous}

Let $(\Omega,d)$ be a metric measure space, we say that a measure
$\mu$ is metrically continuous with respect to metric $d$ if all
$x\in\Omega$ and all $r>0$ it holds that
\[
\lim_{d(x,y)\rightarrow0}\mu(B(x,r)\Delta B(y,r))=0
\]
where $A\Delta B$ stands for a symmetric difference of sets
$A,B\subset\Omega$ and is defined as follows: $A\Delta
B:=(A\backslash B)\cup(B\backslash A).$

The following lemma collects some basic facts about continuity of a
measure with respect to the metric (see \cite{GG} and \cite{Ad} for
the proof).

\begin{lemma}
Let $(\Omega,d,%
\mu
)$ be a metric space with a Borel regular measure $\mu$. Then the
following hold:

\begin{enumerate}
\item  If $\mu$ is continuous with respect to the metric $d$, then the map
$x\rightarrow\mu(B(x,r))$ is continuous in $d$.

\item  If for every $x\in\Omega$ and every $r>0$ it holds that $\mu(\partial
B(x,r))=0$, then $\mu$ is continuous with respect to the metric $d$.

\item  If for every $x\in\Omega$ the function $r\rightarrow\mu(B(x,r))$ is
continuous, then $\mu$ is continuous with respect to the metric $d$.
\end{enumerate}
\end{lemma}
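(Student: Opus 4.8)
The plan is to derive all three implications from a single geometric observation. Writing $\delta=d(x,y)$, the triangle inequality gives, for $\delta<r$, the annular containment
\[
B(x,r)\,\Delta\,B(y,r)\subset B(x,r+\delta)\setminus B(x,r-\delta),
\]
since $B(x,r-\delta)\subset B(y,r)\subset B(x,r+\delta)$ and likewise with the roles of $x,y$ exchanged. I will also use the elementary set inequality $\lvert\mu(A)-\mu(B)\rvert\le\mu(A\,\Delta\,B)$, which is legitimate here because every ball has finite measure, so the subtraction makes sense. Part (1) is then immediate: taking $A=B(x,r)$ and $B=B(y,r)$ gives $\lvert\mu(B(x,r))-\mu(B(y,r))\rvert\le\mu(B(x,r)\,\Delta\,B(y,r))$, and the right-hand side tends to $0$ as $d(x,y)\to0$ by the very definition of metric continuity; this is exactly continuity of $x\mapsto\mu(B(x,r))$ for fixed $r$.

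For part (3) I would feed the annular containment into the monotonicity of $\mu$: since $B(x,r-\delta)\subset B(x,r+\delta)$ and both are of finite measure,
\[
\mu\bigl(B(x,r)\,\Delta\,B(y,r)\bigr)\le\mu\bigl(B(x,r+\delta)\bigr)-\mu\bigl(B(x,r-\delta)\bigr).
\]
As $y\to x$ we have $\delta\to0$, and the hypothesis that $r\mapsto\mu(B(x,r))$ is continuous forces both $\mu(B(x,r\pm\delta))\to\mu(B(x,r))$, so the right-hand side vanishes and $\mu$ is metrically continuous.

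For part (2) I would instead let $\delta\downarrow0$ inside the annuli: one checks that $\bigcap_{\delta>0}\bigl(B(x,r+\delta)\setminus B(x,r-\delta)\bigr)=\{z:d(x,z)=r\}$, the metric sphere, so continuity from above of the finite measure (dominating all the annuli by $B(x,r+1)$) yields $\mu(B(x,r+\delta)\setminus B(x,r-\delta))\to\mu(\{z:d(x,z)=r\})$. Reading $\partial B(x,r)$ as this sphere, the hypothesis $\mu(\partial B(x,r))=0$ makes the limit $0$, and the containment again gives $\mu(B(x,r)\,\Delta\,B(y,r))\to0$. Equivalently, I would note that $\mu(\{z:d(x,z)=r\})=0$ for all $r$ already implies that $r\mapsto\mu(B(x,r))$ is continuous (right-continuity from $B(x,s)\downarrow\overline{B}(x,r)$ and left-continuity from $B(x,s)\uparrow B(x,r)$ as $s\to r$), so that (2) reduces to (3).

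The only delicate point, and the step I expect to be the main obstacle, is part (2): the geometry is the same annular squeeze, but one must correctly identify the limiting exceptional set as the metric sphere $\{z:d(x,z)=r\}$ and reconcile it with the stated hypothesis on $\partial B(x,r)$. The argument genuinely needs $\mu(\{z:d(x,z)=r\})=0$, which is precisely the boundary hypothesis under the sphere interpretation; everything else is routine continuity-of-measure bookkeeping that relies only on the finiteness of ball measures.
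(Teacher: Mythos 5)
Your argument is correct, and since the paper gives no proof of this lemma at all --- it defers entirely to \cite{GG} and \cite{Ad} --- your self-contained derivation is an addition rather than a variant of an internal argument. The mechanism you use (the inequality $|\mu(A)-\mu(B)|\leq\mu(A\,\Delta\,B)$ for sets of finite measure, together with the annular containment $B(x,r)\,\Delta\,B(y,r)\subseteq B(x,r+\delta)\setminus B(x,r-\delta)$ for $\delta=d(x,y)<r$) is the standard one and settles (1) and (3) completely; the finiteness of ball measures, a standing assumption of the paper, is correctly invoked exactly where the subtraction and the continuity-from-above steps require it.

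The point you flag in part (2) is real and deserves to be made explicit: your argument needs $\mu(\{z:d(x,z)=r\})=0$, and the hypothesis $\mu(\partial B(x,r))=0$ delivers this only if $\partial B(x,r)$ denotes the metric sphere (closed ball minus open ball), not the topological boundary $\overline{B(x,r)}\setminus B(x,r)$, which can be strictly smaller. This is not a cosmetic distinction: with the topological reading the statement is false. Take $\Omega=I\cup S\subset\mathbb{R}^{2}$ with $I=\{0\}\times[0,1/2]$, $S$ the unit circle, the Euclidean metric, and $\mu$ the arclength measure. Every metric sphere $\{z\in\Omega:d(x,z)=r\}$ is the intersection of a Euclidean circle with $I\cup S$ and hence consists of at most four points, except for $x=(0,0)$, $r=1$, where it equals $S$; but $B((0,0),1)=I$ is clopen in $\Omega$ (as $\operatorname{dist}(I,S)=1/2$), so its topological boundary is empty, and therefore $\mu(\partial B(x,r))=0$ for all $x,r$ in the topological sense. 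Nevertheless $B((0,\varepsilon),1)\cap S$ is the arc $\{\sin\theta>\varepsilon/2\}$, so $\mu\bigl(B((0,0),1)\,\Delta\,B((0,\varepsilon),1)\bigr)\to\pi$ as $\varepsilon\to0$ and $\mu$ is not metrically continuous. The cited sources indeed use $\partial B(x,r)=\{z:d(x,z)=r\}$, and under that convention your continuity-from-above argument (or your reduction of (2) to (3) via the one-sided monotone limits of $\mu(B(x,s))$ as $s\to r$) closes the proof; the only thing I would add to your write-up is this explicit statement of the convention, since it is the difference between a true and a false assertion.
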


It is easy to see that if we take $\mathbb{R}^{n}$ with Lebesgue
measure (or with an absolutely continuous measure respect to the
Lebesgue measure) with the Euclidean distance, then this measure is
metrically continuous. In fact we
have more (see \cite[Proposition 2.1]{GG} ) if $(\Omega,d,%
\mu
)$ and $(\Omega,d,\nu)$ are metric measure spaces then if
$\mu\prec\prec\nu$ and $\nu$ is metrically continuous, then $\mu$ is
metrically continuous too.

An important example is the following (see \cite{MMNO}).

\begin{lemma}
Let $%
\mu
$ be a nonnegative Radon measure on $\mathbb{R}^{n}$. Assume that
for any
point $p\in\mathbb{R}^{n}$, $%
\mu
(\{p\})=0$, \ then we choose the coordinate axes in such a way that $%
\mu
(\partial Q)=0$ for all cubes $Q$ with sides parallel to the axes.
In particular the function $\ell\rightarrow\mu(Q(x,\ell))$ where
$\ell$ denotes the length of the edge and $x$ is the center of $Q$,
is continuous.
\end{lemma}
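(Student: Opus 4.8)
The plan is to reformulate the conclusion as a statement about projections and then to show that the ``bad'' directions form a negligible subset of the unit sphere (the case $n=1$ being immediate, since the boundary of an interval is a pair of points and $\mu$ has no point masses). For a unit vector $\nu\in S^{n-1}$ and $c\in\mathbb{R}$ write $H_{\nu,c}=\{x\in\mathbb{R}^{n}:\langle x,\nu\rangle=c\}$ for the hyperplane with normal $\nu$ at level $c$, and call $\nu$ \emph{good} if $\mu(H_{\nu,c})=0$ for every $c$; equivalently the pushforward $(\pi_{\nu})_{\ast}\mu$ under $x\mapsto\langle x,\nu\rangle$ has no atom. Since the hyperplanes $\{H_{\nu,c}\}_{c}$ are pairwise disjoint and $\mu$ is $\sigma$-finite, for each fixed $\nu$ at most countably many levels can satisfy $\mu(H_{\nu,c})>0$. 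Because the faces of an axis-parallel cube lie in hyperplanes normal to the axes, once I have an orthonormal frame $\nu_{1},\dots,\nu_{n}$ of good directions I may take these as the coordinate axes and conclude $\mu(\partial Q)=0$ for every cube $Q$ aligned with them. So everything reduces to producing such a frame, and for that I will show that the set $B\subset S^{n-1}$ of bad directions is $\sigma^{n-1}$-null (surface measure zero). Exhausting $\mathbb{R}^{n}$ by balls and replacing $\mu$ by the finite measures $\mu_{k}(\cdot)=\mu(\cdot\cap B(0,k))$, whose bad sets increase to $B$, reduces matters to a finite measure $\mu$.

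The main step is the identity, valid for finite $\mu$ with $\mu(\{p\})=0$ for all $p$,
\[
\sum_{c}\mu(H_{\nu,c})^{2}=(\mu\times\mu)\bigl(\{(x,y):\langle x-y,\nu\rangle=0\}\bigr),
\]
which follows by writing the left-hand side as $(\mu\times\mu)\bigl(\bigcup_{c}H_{\nu,c}\times H_{\nu,c}\bigr)$ and noting that the extra pairs $(x,y)$ with $\langle x-y,\nu\rangle=0$ lie on null slices. Integrating over the sphere and applying Tonelli gives
\[
\int_{S^{n-1}}\Bigl(\sum_{c}\mu(H_{\nu,c})^{2}\Bigr)\,d\sigma(\nu)=\int_{\mathbb{R}^{n}\times\mathbb{R}^{n}}\sigma\bigl(\{\nu\in S^{n-1}:\langle x-y,\nu\rangle=0\}\bigr)\,d(\mu\times\mu)(x,y).
\]
For $x\neq y$ the inner set is a great subsphere $S^{n-2}$, hence $\sigma$-null, while the diagonal $\{x=y\}$ is $(\mu\times\mu)$-null precisely because $\mu$ has no point masses. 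Thus the right-hand side vanishes, forcing $\sum_{c}\mu(H_{\nu,c})^{2}=0$ for $\sigma$-a.e.\ $\nu$; that is, almost every direction is good and $\sigma^{n-1}(B)=0$. Discovering this quadratic quantity, whose spherical average is exactly computable and collapses to the diagonal, is the crux of the argument.

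To extract an orthonormal frame avoiding the null set $B$ I will choose the entire frame at once rather than greedily: a naive induction fails because the trace of $B$ on a fixed great subsphere need not be null there. Instead, let $R$ range over the rotation group equipped with its Haar probability measure; by invariance each column $Re_{i}$ is uniformly distributed on $S^{n-1}$, so the Haar measure of $\{R:Re_{i}\in B\}$ equals $\sigma(B)/\sigma(S^{n-1})=0$, and a union bound over $i=1,\dots,n$ shows that for Haar-almost every $R$ all columns are good. Picking any such $R$ and taking its columns as the coordinate axes yields the desired frame.

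Finally, with these axes $\partial Q$ is contained in the $2n$ hyperplanes normal to the axes through its faces, each $\mu$-null, so $\mu(\partial Q)=0$ for every axis-parallel cube. Continuity of $\ell\mapsto\mu(Q(x,\ell))$ then follows from continuity of measure: as $\ell\uparrow\ell_{0}$ the cubes increase to the open cube $Q(x,\ell_{0})^{\circ}$, and as $\ell\downarrow\ell_{0}$ they decrease to the closed cube $\overline{Q(x,\ell_{0})}$ (using that $\mu$ is finite on bounded sets in the decreasing case), and both limits equal $\mu(Q(x,\ell_{0}))$ since the boundary carries no mass.
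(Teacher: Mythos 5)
Your proof is correct and complete. Note that the paper does not actually prove this lemma: it is quoted from Mateu--Mattila--Nicolau--Orobitg \cite{MMNO}, so there is no in-paper argument to compare against, and what you have written is essentially the classical integral-geometric proof underlying the cited result, supplied here in full. The two substantive points are both handled properly. First, the identity $\sum_{c}\mu(H_{\nu,c})^{2}=(\mu\times\mu)\bigl(\{(x,y):\langle x-y,\nu\rangle=0\}\bigr)$ is justified because, by Fubini, the pairs lying on null slices contribute $\int\mu(H_{\nu,\langle x,\nu\rangle})\,\chi_{\{\mu(H_{\nu,\langle x,\nu\rangle})=0\}}\,d\mu(x)=0$; Tonelli over the sphere then kills the off-diagonal pairs via the nullity of great subspheres $S^{n-2}\subset S^{n-1}$ and the diagonal via the no-atoms hypothesis, which is precisely where that hypothesis enters. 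Second, the extraction of an orthonormal frame from the complement of the null set $B$ is the step where careless arguments break down, and your observation that a greedy induction can fail (a $\sigma$-null subset of $S^{n-1}$ may contain an entire great subsphere, e.g.\ when $\mu$ is supported on a line) together with the fix via Haar measure on the rotation group --- each column $Re_{i}$ is uniformly distributed, so a union bound suffices without any independence --- is exactly right. The remaining reductions (to the finite measures $\mu_{k}$, the trivial case $n=1$, the passage from null hyperplanes to $\mu(\partial Q)=0$, and the final continuity of $\ell\mapsto\mu(Q(x,\ell))$ by monotone limits of cubes, using that $\mu$ is Radon for continuity from above) are standard and correctly executed.
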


\subsection{An example of an $\alpha-$lower bounded measure which is not
$c-$almost continuous}

Consider $\mathbb{R}^{2}$ with the distance
$d_{\infty}(x,y)=\max\left\{ \left|  x\right|  ,\left|  y\right|
\right\}  $ and the measure  $\mu=$ Lebesgue measure in the plane +
length measure in vertical axis + length measure in vertical
straight line passing through $(1,0)$. It is easy to see that
\[
\frac{r^{2}}{4}\leq\mu(B(x,r))
\]
however, is not $c-$almost continuous.

\

{\bf Acknowledgements} The authors are grateful to professor Xavier
Tolsa for providing us the above example.

\end{document}